\theoremstyle{plain}
\newtheorem{theorem}{Theorem}[section]
\newtheorem{corol}[theorem]{Corollary}
\theoremstyle{definition}
\newtheorem{remark}[theorem]{Remark}
\newtheorem{example}[theorem]{Example}
\begin{document}
\vspace{\baselineskip} \thispagestyle{empty}

\title{On the spectrum of the tridiagonal matrices with two-periodic main diagonal}

\author{
Alexander Dyachenko\thanks{Keldysh Institute of Applied Mathematics, Russian Academy of Sciences, 125047, Moscow,  Russia}
\\
\small Email: {\tt diachenko@sfedu.ru}
 \and
Mikhail Tyaglov%
\thanks{School of Mathematical Sciences and MOE-LSC, Shanghai Jiao Tong University, Shanghai, 200240, P.R. China}
\\
\small Email: {\tt tyaglov@mail.ru}}

\date{\small \today}

\maketitle

\abstract{We find the spectrum and eigenvectors of an arbitrary irreducible complex tridiagonal matrix with two-periodic main
diagonal provided that the spectrum and eigenvectors of the matrix with the
same sub- and superdiagonals and zero main diagonal is known. Our result substantially
generalises some recent results on the Sylvester-Kac matrix and its certain main principal submatrices.}

\vspace{5mm}

\noindent\textbf{Key words.} Tridiagonal matrices, spectrum, eigenvectors, two-periodic perturbation

\vspace{2mm}

\noindent\textbf{AMS subject classification.} 15A18, 15B05, 15A15

\vspace{5mm}

\setcounter{equation}{0}
\section{Introduction}

The present note is intended to simplify and generalise some recent findings on certain modifications of the so-called Sylvester-Kac matrix
\begin{equation*}
K_N:=\begin{pmatrix}
  0   &   1 &  0   &\dots&   0    &   0    &   0   \\
N &   0   &  2 &\dots&   0    &   0    &   0   \\
   0   & N-1 &   0   &\dots&   0    &   0    &   0   \\
\vdots&\vdots&\vdots&\ddots&\vdots&\vdots&\vdots\\
   0   &   0   &   0   &\dots& 0 &   N-1 &0\\
   0   &   0   &   0   &\dots&2&   0    &N\\
   0   &   0   &   0   &\dots&0&1 & 0    \\
\end{pmatrix}.
\end{equation*}
This matrix seemingly appeared for the first time in a work of J.J.\;Sylvester~\cite{Sylvester} and later was studied
by a number of mathematicians in XIX century, see~\cite{Muir.II} for references. In XX century,
Mark Kac rediscovered this matrix in his famous work on the Brownian motion~\cite{Kac}. A good survey
on the Sylvester-Kac matrix was presented by O.\,Taussky and J.\,Todd in~\cite{TausskyTodd}. References to some recent results on the Sylvester-Kac matrix
are given, e.g., in~\cite{DyTy_1}. Here
we avoid reviewing this topic: instead, we show that some recent results around Sylvester-Kac-like matrices can be extended to
an interesting property of a much larger class of tridiagonal matrices.

In the paper~\cite{Kilic_2013}, the author added to $K_N$ the main diagonal where the
entries with even and odd indices have the same absolute value but opposite signs,
and found the determinant of that matrix. The authors of~\cite{KilicArikan_2016}
calculated the determinant of a matrix obtained from $K_N$ by adding a non-zero two-periodic
main diagonal. Note that a shorter proof of the result of~\cite{KilicArikan_2016} was
given in~\cite{Fonseca_2018}.
From our Section~\ref{section:two.periodic} it is clear that the results of~\cite{Fonseca_2018,KilicArikan_2016} immediately follow from the result of~\cite{Kilic_2013}.

In the paper~\cite{FonsecaKilic_2019}, the authors dealt with a certain
submatrix of the Sylvester-Kac matrix $K_N$ (considered much earlier by A.\,Caley~\cite{Caley.1858}). Among other results, they found the determinant and the eigenvalues of this submatrix with
an added two-periodic main diagonal.

In this paper, we generalise the aforementioned results of the works~\cite{Fonseca_2018,FonsecaKilic_2019,Kilic_2013,KilicArikan_2016}
to the class of arbitrary (irreducible) complex tridiagonal matrices with two-periodic main diagonal. We show that the findings
of~\cite{Kilic_2013} can be easily extended (we give an independent proof) to such a class of matrices\footnote{It is clear
that the Sylvester-Kac matrix and the matrix considered in~\cite{FonsecaKilic_2019} belong to this class,
since they are irreducible tridiagonal and have zero main diagonal.}. This result allows us to generalise~\cite{KilicArikan_2016} and (partially)~\cite{FonsecaKilic_2019}.
Namely, we determine the eigenvalues and the determinant of a tridiagonal matrix with two-periodic main diagonal via the eigenvalues of the same matrix but with zero main diagonal -- treating the former (perturbed) matrix as a two-periodic perturbation of the latter (unperturbed) matrix.

The paper is organised as follows.  Section~\ref{section:zero.main.diag} is devoted to a brief review
of spectral properties of tridiagonal matrices with zero main diagonal. In Section~\ref{section:alternate},
we express the eigenvalues, eigenvectors, and first generalised eigenvectors of the perturbed matrix
through those
of the corresponding unperturbed matrix.
In Section~\ref{section:two.periodic}, we extend the results of
Section~\ref{section:alternate} to tridiagonal matrices with two-periodic main diagonal.

Note that irreducible tridiagonal matrices are closely related to orthogonal polynomials. In our further work~\cite{DyTy_2}
we present alternative proofs of the results of the present paper and exact formulas for eigenvectors' interrelations.


\setcounter{equation}{0}
\section{Tridiagonal matrices with zero main diagonal}\label{section:zero.main.diag}

Consider an $n\times n$ complex irreducible tridiagonal matrix whose main diagonal only contains zero entries
\begin{equation}\label{main.matrix.intro}
J_n=
\begin{pmatrix}
    0 & c_1 &  0 &\dots&   0   & 0 \\
    a_1 & 0 &c_2 &\dots&   0   & 0 \\
     0  &a_2 & 0 &\dots&   0   & 0 \\
    \vdots&\vdots&\vdots&\ddots&\vdots&\vdots\\
     0  &  0  &  0  &\dots&0& c_{n-1}\\
     0  &  0  &  0  &\dots&a_{n-1}&0\\
\end{pmatrix},\qquad a_k,c_k\in\mathbb{C}\setminus\{0\}.
\end{equation}
Since the main diagonal of $J_n$ is zero, the spectrum of $J_n$ is symmetric w.r.t. zero.
\begin{theorem}\label{th:J.spectrum}
The spectrum of the matrix $J_n$ has the form
\begin{equation}\label{spectrum.J}
\sigma\left(J_{2l}\right)=\{\pm\lambda_1,\pm\lambda_2,\ldots,\pm\lambda_l\}\qquad\text{and}\qquad
\sigma\left(J_{2l+1}\right)=\{0,\pm\lambda_1,\pm\lambda_2,\ldots,\pm\lambda_l\}.
\end{equation}
Here the numbers $\lambda_i$ are not necessary distinct: each eigenvalue of~$J_n$ appears as many times as its algebraic multiplicity. For even~$n$ all $\lambda_i$ are non-zero.
\end{theorem}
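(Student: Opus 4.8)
The plan is to use the ``chessboard'' similarity $J_n\sim -J_n$ to pin down the parity of the characteristic polynomial, and an elementary determinant recursion to exclude the eigenvalue $0$ in the even case. First I would take the signature matrix $D=\operatorname{diag}\bigl((-1)^{k-1}\bigr)_{k=1}^{n}$, so that $D^{2}=I_n$ and, since the only nonzero entries of $J_n$ lie on the two off-diagonals while its main diagonal vanishes, $DJ_nD=-J_n$. Hence $J_n$ and $-J_n$ are similar and share a characteristic polynomial, which gives
\begin{equation*}
p_n(\lambda):=\det(\lambda I_n-J_n)=\det\bigl(D(\lambda I_n-J_n)D\bigr)=\det(\lambda I_n+J_n)=(-1)^{n}\,p_n(-\lambda),
\end{equation*}
so $p_n$ is an even polynomial for even $n$ and an odd polynomial for odd $n$.

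For $n=2l$ the monic degree-$2l$ polynomial $p_{2l}$ is even, hence $p_{2l}(\lambda)=q(\lambda^{2})$ with $q$ monic of degree $l$; writing $q(\mu)=\prod_{i=1}^{l}(\mu-\lambda_i^{2})$ yields $p_{2l}(\lambda)=\prod_{i=1}^{l}(\lambda-\lambda_i)(\lambda+\lambda_i)$, which is the first formula in \eqref{spectrum.J} with algebraic multiplicities included --- in particular the multiplicities of $\lambda_i$ and $-\lambda_i$ agree automatically. For $n=2l+1$ the odd polynomial $p_{2l+1}$ is divisible by $\lambda$, say $p_{2l+1}(\lambda)=\lambda\,\tilde q(\lambda^{2})$ with $\tilde q$ monic of degree $l$, and factoring $\tilde q$ in the same way gives $\sigma(J_{2l+1})=\{0,\pm\lambda_1,\dots,\pm\lambda_l\}$ (any further vanishing of $\tilde q$ at $0$ merely forces some $\lambda_i=0$, consistently with $0$ having odd algebraic multiplicity).

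It then remains to check that for even $n$ no $\lambda_i$ vanishes, i.e.\ $\det J_{2l}\neq 0$. Here I would invoke the standard recursion for the leading principal minors of a tridiagonal matrix; since the main diagonal of $J_n$ is zero it collapses to $\Delta_k=-a_{k-1}c_{k-1}\Delta_{k-2}$ with $\Delta_0=1$ and $\Delta_1=0$, whence $\det J_{2l}=\Delta_{2l}=(-1)^{l}\prod_{k=1}^{l}a_{2k-1}c_{2k-1}\neq 0$ because every $a_k$ and $c_k$ is nonzero. Thus $0\notin\sigma(J_{2l})$, so all $\lambda_i$ are non-zero. The argument is essentially bookkeeping; the only point that needs any care is that passing to a polynomial in $\lambda^{2}$ (times $\lambda$ when $n$ is odd) is precisely what makes the pairing $\pm\lambda_i$ respect algebraic multiplicities, and I do not anticipate a genuine obstacle.
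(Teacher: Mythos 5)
Your proposal is correct and follows essentially the same two-step route as the paper: establish that the spectrum of $J_n$ is symmetric with respect to the origin, then rule out the eigenvalue $0$ for even $n$ via the explicit value $\det J_{2l}=(-1)^l\prod_{k=1}^{l}a_{2k-1}c_{2k-1}\neq0$. The only difference is in how the symmetry is obtained: the paper observes from the three-term recurrence $\chi_{k+1}(z)=z\chi_k(z)-a_kc_k\chi_{k-1}(z)$ that the characteristic polynomial depends only on the products $a_kc_k$, hence coincides for $J_n$ and $-J_n$, whereas you use the signature similarity $DJ_nD=-J_n$ --- which is exactly the matrix $E_n$ the paper itself deploys in Theorem~\ref{th:lam-lam}; your explicit factorisation of the even/odd characteristic polynomial also makes the pairing of algebraic multiplicities slightly more transparent than the paper's phrasing.
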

\begin{proof}
Let $\chi_k(z)$, $k=1,\ldots,n$, be the characteristic polynomial of the $k^{th}$ leading principal submatrix of~$J_n$. Then the
following three-term recurrence relations hold
\begin{equation}\label{TTR}
\chi_{k+1}(z)=z\chi_{k}(z)-a_kc_k\chi_{k-1}(z),\quad k=0,1,\ldots,n-1,
\end{equation}
with $\chi_{-1}(z)\equiv0$, $\chi_{0}(z)\equiv1$. From~\eqref{TTR} it follows that the polynomials $\chi_k(z)$ do not depend on $a_k$ and $c_k$
separately -- only on the product~$a_k c_k$. Therefore, the matrices $J_n$ and $-J_n$ have the same eigenvalues, and hence the spectrum of $J_n$ is
symmetric w.r.t. $0$. In particular, if~$n$ is odd, the matrix $J_n$ is singular.

However,
\begin{equation*}
\det(J_{2l})=(-1)^l\prod\limits_{k=1}^{l}a_{2k-1}c_{2k-1}\neq0,
\end{equation*}
so for even $n$, the matrix $J_n$ is non-singular.
\end{proof}

At the same time, the matrix $J_n$ may have the zero eigenvalue of any odd multiplicity.

\begin{example}
When $n$ is odd, the matrix~\eqref{main.matrix.intro} can even be nilpotent. For instance, zero is the only eigenvalue of
the matrix
\begin{equation}
\begin{pmatrix}
0&1&0&0&0\\
1&0&1&0&0\\
0&1&0&-4&0\\
0&0&1&0&2\\
0&0&0&1&0\\
\end{pmatrix}
.
\end{equation}
\end{example}

It is a folklore that any irreducible\footnote{A tridiagonal matrix is irreducible whenever its sub- and superdiagonals contain no zeroes.} tridiagonal matrix $J$ is non-derogatory, that is, all its eigenvalues are geometrically simple.
Indeed, if $\lambda$ is a an eigenvalue of $J$, then $J-\lambda I_n$, where $I_n$ is
the $n\times n$ identity matrix, is singular but its submatrix obtained from $J-\lambda I_n$ by deleting, say,
the first column and the last row, is regular. Thus, every eigenvalue has only one eigenvector.

From the form of the matrix~\eqref{main.matrix.intro} and its spectrum, it follows that
the eigenvectors and the generalised eigenvectors of the eigenvalues $\lambda_i$ and $-\lambda_i$ are closely related.

\begin{theorem}\label{th:lam-lam}
Let $\lambda$ and $-\lambda$ be eigenvalues of the matrix $J_n$ of multiplicity $k\geqslant1$. If
$\textbf{u}_0^{(\lambda)}$ is the eigenvector and $\textbf{u}_j^{(\lambda)}$, $j=1,\ldots,k-1$, are the generalised eigenvectors
of $J_n$ corresponding to $\lambda$, then
\begin{equation*}
\textbf{u}_0^{(-\lambda)}=E_n\textbf{u}_0^{(\lambda)},
\end{equation*}
\begin{equation}\label{vectors.v}
\textbf{u}_j^{(-\lambda)}=(-1)^{j}E_n\textbf{u}_j^{(\lambda)},\quad i=1,\ldots,k-1,
\end{equation}
are the eigenvector and the generalised eigenvectors of $J_n$ corresponding to $-\lambda$.
Here the matrix $E_n=\{e_{ij}\}_{i,j=1}^{n}$ is defined as follows
\begin{equation}\label{Matrix.E}
e_{ij}=
\begin{cases}
&(-1)^{i-1}\quad\text{if}\quad i=j,\\
&\quad0\quad\qquad\text{if}\quad i\neq j.\\
\end{cases}
\end{equation}
\end{theorem}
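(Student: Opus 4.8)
The plan is to reduce everything to a single matrix identity, namely that the sign matrix $E_n$ anticommutes with $J_n$. Since $E_n$ is diagonal with $E_n^2=I_n$, the $(i,j)$ entry of $E_nJ_nE_n$ equals $(-1)^{i+j}(J_n)_{ij}$; as $J_n$ is tridiagonal with vanishing main diagonal, its only nonzero entries lie on the sub- and superdiagonals, where $i+j$ is odd and hence $(-1)^{i+j}=-1$. Therefore
\begin{equation}\label{eq:EJanticomm}
E_nJ_n=-J_nE_n,\qquad\text{equivalently}\qquad E_n\bigl(J_n-\lambda I_n\bigr)=-\bigl(J_n+\lambda I_n\bigr)E_n .
\end{equation}
(This identity also immediately re-proves the spectral symmetry of $J_n$, but here we only need it to transport Jordan chains.)

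Next I would feed the Jordan chain of $\lambda$ into \eqref{eq:EJanticomm}. By the non-derogatory property of irreducible tridiagonal matrices recalled above, the vectors $\textbf{u}_0^{(\lambda)},\dots,\textbf{u}_{k-1}^{(\lambda)}$ satisfy $(J_n-\lambda I_n)\textbf{u}_0^{(\lambda)}=0$ and $(J_n-\lambda I_n)\textbf{u}_j^{(\lambda)}=\textbf{u}_{j-1}^{(\lambda)}$ for $1\le j\le k-1$. Applying $E_n$ to these relations and using \eqref{eq:EJanticomm} yields $(J_n+\lambda I_n)\bigl(E_n\textbf{u}_0^{(\lambda)}\bigr)=0$ and $(J_n+\lambda I_n)\bigl(E_n\textbf{u}_j^{(\lambda)}\bigr)=-E_n\textbf{u}_{j-1}^{(\lambda)}$ for $1\le j\le k-1$, i.e.\ the images $E_n\textbf{u}_j^{(\lambda)}$ already form a chain for $-\lambda$ up to an alternating sign.

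Finally I would absorb that sign by the rescaling in \eqref{vectors.v}: with $\textbf{u}_0^{(-\lambda)}:=E_n\textbf{u}_0^{(\lambda)}$ and $\textbf{u}_j^{(-\lambda)}:=(-1)^jE_n\textbf{u}_j^{(\lambda)}$, a one-line check gives $(J_n+\lambda I_n)\textbf{u}_j^{(-\lambda)}=(-1)^j\bigl(-E_n\textbf{u}_{j-1}^{(\lambda)}\bigr)=(-1)^{j-1}E_n\textbf{u}_{j-1}^{(\lambda)}=\textbf{u}_{j-1}^{(-\lambda)}$, which is precisely the defining relation of a Jordan chain at the eigenvalue $-\lambda$. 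Since $E_n$ is invertible, these $k$ vectors are linearly independent, and since $-\lambda$ has the same multiplicity $k$ by Theorem~\ref{th:J.spectrum}, they constitute the eigenvector and generalised eigenvectors attached to $-\lambda$. There is no real obstacle beyond bookkeeping; the only point requiring a moment's care is matching the alternating signs in \eqref{vectors.v} against the sign produced by \eqref{eq:EJanticomm} at each rung of the chain.
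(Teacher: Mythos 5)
Your proof is correct and follows essentially the same route as the paper: the anticommutation identity $E_nJ_n=-J_nE_n$ is exactly the paper's conjugation identity $J_n-\lambda I_n=-E_n(J_n+\lambda I_n)E_n$, and the transport of the Jordan chain with the alternating sign absorbed by $(-1)^j$ is the same computation. The added remarks on linear independence and multiplicity are a harmless (and slightly more careful) supplement.
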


\begin{remark}\label{Matrix.E-lam0}
Substitution of~$\lambda=0$ shows that the eigenvector~$\textbf{u}_0^{(0)}$ only has zero odd components; more generally, for~$j=0,1,\dots$ all odd components of~$\textbf{u}_{2j}^{(0)}$ and even components of~$\textbf{u}_{2j+1}^{(0)}$ are equal to zero.
\end{remark}
\begin{proof}[Proof of Theorem~\ref{th:lam-lam}]
Indeed, it is easy to see that
\begin{equation*}
J_n-\lambda I_n=-E_n(J_n+\lambda I_n)E_n,
\end{equation*}
where $I_n$ is the $n\times n$ identity matrix. By definition,
\begin{equation*}
(J_n-\lambda I_n)\textit{\textbf{u}}_0^{(\lambda)}=0,\qquad (J_n-\lambda I_n)\textit{\textbf{u}}_j^{(\lambda)}=\textit{\textbf{u}}_{j-1}^{(\lambda)},\ \ \ j=1,\ldots,k-1,
\end{equation*}
so we have
\begin{equation*}
-E_n(J_n+\lambda I_n)E_n\textit{\textbf{u}}_0^{(\lambda)}=0,\qquad -E_n(J_n+\lambda I_n)E_n\textit{\textbf{u}}_j^{(\lambda)}=\textit{\textbf{u}}_{j-1}^{(\lambda)},\ \ \ j=1,\ldots,k-1,
\end{equation*}
or
\begin{equation*}
(J_n+\lambda I_n)\textit{\textbf{u}}_0^{(-\lambda)}=0,\qquad (J_n+\lambda I_n)\textit{\textbf{u}}_j^{(-\lambda)}=\textit{\textbf{u}}_{j-1}^{(-\lambda)},\ \ \ j=1,\ldots,k-1,
\end{equation*}
where $\textit{\textbf{u}}_j^{(-\lambda)}$ are defined in~\eqref{vectors.v}, as required.
\end{proof}

\setcounter{equation}{0}
\section{Tridiagonal matrices with alternating signs main diagonal}\label{section:alternate}

Consider the matrix
\begin{equation}\label{Matrix.A}
A_n=J_n+xE_n,
\end{equation}
%
%
%
where the matrices $J_n$ and $E_n=\{e_{ij}\}_{i,j=1}^{n}$ are defined in~\eqref{main.matrix.intro} and~\eqref{Matrix.E}, respectively, so
the main diagonal of the matrix $A_n$ contains entries of the same non-zero absolute value and of alternating signs. The following fact holds.
\begin{theorem}\label{th:An-spectrum}
The spectrum of the matrix $A_n$ defined in~\eqref{Matrix.A} has the form
\begin{equation}\label{spectrum.A.1}
\sigma\left(A_{2l}\right)=\{\pm\sqrt{\lambda_1^2+x^2},\pm\sqrt{\lambda_2^2+x^2},\ldots,\pm\sqrt{\lambda_l^2+x^2}\},
\end{equation}
and
\begin{equation}\label{spectrum.A.2}
\sigma\left(A_{2l+1}\right)=\{x,\pm\sqrt{\lambda_1^2+x^2},\pm\sqrt{\lambda_2^2+x^2},\ldots,\pm\sqrt{\lambda_l^2+x^2}\}
\end{equation}
for~$l=\big\lfloor\frac n2\big\rfloor$, where $\{\lambda_i\}_{i=1}^l$ belong to the spectrum of $J_n$, see~\eqref{spectrum.J}.
\end{theorem}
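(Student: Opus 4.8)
The plan is to relate $A_n^2$ to $J_n^2$ and exploit the block structure induced by the parity of indices. Writing $A_n = J_n + xE_n$ and using the key algebraic identity $E_n J_n E_n = -J_n$ from the proof of Theorem~\ref{th:lam-lam}, one computes
\begin{equation*}
A_n^2 = (J_n+xE_n)^2 = J_n^2 + x(J_nE_n+E_nJ_n) + x^2 I_n.
\end{equation*}
Here $E_nJ_n+E_nJ_n$ should read $J_nE_n+E_nJ_n$; since $E_nJ_nE_n=-J_n$ gives $E_nJ_n = -J_nE_n$, the cross term $J_nE_n+E_nJ_n$ vanishes identically. Hence $A_n^2 = J_n^2 + x^2 I_n$. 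This is the crucial observation: squaring the perturbed matrix simply shifts the square of the unperturbed matrix by $x^2$.

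Next I would read off the eigenvalues of $A_n^2$. Since $A_n^2 = J_n^2 + x^2I_n$, the spectrum of $A_n^2$ is $\{\mu + x^2 : \mu \in \sigma(J_n^2)\}$ with multiplicities. By Theorem~\ref{th:J.spectrum}, the eigenvalues of $J_n$ are $\pm\lambda_1,\dots,\pm\lambda_l$ (plus a $0$ when $n$ is odd), so $\sigma(J_n^2) = \{\lambda_1^2,\dots,\lambda_l^2\}$ (with an extra $0$ in the odd case), where each $\lambda_i^2$ has multiplicity equal to the combined multiplicity of $\pm\lambda_i$ in $J_n$ — this doubling is exactly what matches the $\pm$ pairs in \eqref{spectrum.A.1}–\eqref{spectrum.A.2}. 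Therefore $\sigma(A_n^2) = \{\lambda_1^2+x^2,\dots,\lambda_l^2+x^2\}$, together with $x^2$ when $n$ is odd. Consequently every eigenvalue $\nu$ of $A_n$ satisfies $\nu^2 \in \sigma(A_n^2)$, so $\nu \in \{\pm\sqrt{\lambda_i^2+x^2}\}$ (and $\nu = \pm x$ in the odd case).

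It remains to determine which signs actually occur and with what multiplicity; here I would use the symmetry furnished by $E_n$. The identity $E_n A_n E_n = E_n J_n E_n + x E_n^2 = -J_n + xE_n$ shows $E_n A_n E_n = -(J_n - xE_n) = -A_n^{\top\!\!}$-like object; more usefully, observe that $J_n - xE_n$ is similar to $J_n + xE_n$ via the permutation reversing the order of the basis (or directly: replacing $x$ by $-x$ leaves the set \eqref{spectrum.A.1}–\eqref{spectrum.A.2} invariant, reflecting that $A_n(x)$ and $A_n(-x)$ have the same spectrum because conjugating by $\mathrm{diag}(\pm1)$ with the opposite pattern negates the diagonal while preserving the product $a_kc_k$ of off-diagonal entries). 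Combined with the trace identity $\operatorname{tr} A_n = x$ for odd $n$ (since the alternating diagonal of $A_{2l+1}$ sums to $x$) and $\operatorname{tr} A_{2l}=0$, the only sign distribution consistent with $\sigma(A_n^2)$ and the trace is the one in \eqref{spectrum.A.1}–\eqref{spectrum.A.2}: the $\pm$ pairs cancel in the trace, and the leftover eigenvalue in the odd case must equal $x$ (not $-x$). For the odd case one can alternatively exhibit the eigenvector directly, using Remark~\ref{Matrix.E-lam0}: the vector $\mathbf{u}_0^{(0)}$ of $J_n$ has only even-indexed nonzero entries, and one checks $A_{2l+1}\mathbf{u}_0^{(0)} = (J_{2l+1}+xE_{2l+1})\mathbf{u}_0^{(0)} = 0 + x\mathbf{u}_0^{(0)}$ since $E_{2l+1}$ acts as $+1$ on even-indexed coordinates in odd dimension.

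The main obstacle is the bookkeeping of multiplicities when $A_n$ is not diagonalisable: one must argue that the algebraic multiplicity of $\nu$ as an eigenvalue of $A_n$ is correctly captured by passing to $A_n^2$. Since $A_n$ is irreducible tridiagonal, it is non-derogatory (each eigenvalue has a single Jordan block, as noted before Theorem~\ref{th:lam-lam}), so its characteristic polynomial determines the Jordan structure; and from $A_n^2 = J_n^2 + x^2 I_n$ one gets that the characteristic polynomial of $A_n^2$ is $\prod(z - x^2 - \lambda_i^2)$ with the stated multiplicities. The map $\nu \mapsto \nu^2$ together with the sign analysis above then pins down $\det(zI_n - A_n)$ exactly, giving \eqref{spectrum.A.1}–\eqref{spectrum.A.2} with multiplicities. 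I expect the cleanest writeup to avoid Jordan forms entirely by comparing characteristic polynomials: $\det(z^2 I_n - A_n^2) = \det(zI_n-A_n)\det(zI_n+A_n)$, the left side equals $\det((z^2-x^2)I_n - J_n^2)$, and a parity/degree count forces the factorisation of $\det(zI_n - A_n)$ into the claimed linear factors.
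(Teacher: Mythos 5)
Your opening step is exactly the paper's: from $J_nE_n+E_nJ_n=0$ you get $A_n^2=J_n^2+x^2I_n$, hence every eigenvalue $\nu$ of $A_n$ satisfies $\nu^2=\lambda_k^2+x^2$ for some $k$ (or $\nu^2=x^2$ in the odd case). The gap is in the step where you decide which signs occur and with what multiplicity. Your trace argument does not suffice over $\mathbb{C}$: writing $\mu_k=\sqrt{\lambda_k^2+x^2}$, the constraint $\operatorname{tr}A_{2l}=0$ only says $\sum_k c_k\mu_k=0$ with $c_k\in\{-2,0,2\}$, and since the $\lambda_k$ (hence the $\mu_k$) are arbitrary complex numbers, nontrivial relations such as $\mu_1+\mu_2=\mu_3$ are possible, so the sign pattern $\{\mu_1,\mu_1,\mu_2,\mu_2,-\mu_3,-\mu_3\}$ is not excluded. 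The fallback you sketch at the end has the same defect: $\det(zI_n-A_n)\det(zI_n+A_n)=\det\bigl((z^2-x^2)I_n-J_n^2\bigr)$ determines only the product $p(z)p(-z)$ up to sign, which does not determine $p(z)$ (e.g.\ $(z-1)(z-2)(z+3)$ and $(z-1)(z-2)(z-3)$ have the same product with their reflections), so no ``parity/degree count'' can force the claimed factorisation. Also, your symmetry claim that conjugating by a $\pm1$ diagonal matrix ``negates the diagonal'' is false --- diagonal conjugation fixes the diagonal of a matrix --- so that route does not rescue the sign analysis either.

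What the paper does at this point, and what your proof is missing, is a constructive argument: for each nonzero $\lambda_k$ it exhibits explicit eigenvectors of $A_n$ for \emph{both} values $\pm\mu$, namely $\bm v_0^{(\mu)}=\bm u_0^{(\lambda_k)}+\tfrac{x}{\lambda_k+\mu}\bm u_0^{(-\lambda_k)}$ and $\bm v_0^{(-\mu)}=\bm u_0^{(-\lambda_k)}-\tfrac{x}{\lambda_k+\mu}\bm u_0^{(\lambda_k)}$, using the action of $E_n$ on the eigenvectors from Theorem~\ref{th:lam-lam}; it treats separately the degenerate case $x^2=-\lambda_j^2$ (where $0$ becomes a double eigenvalue of $A_n$ with a genuine Jordan block) and, for odd $n$, the eigenvalue $x$ coming from $\bm u_0^{(0)}$ together with the eigenvalue $-x$ that appears only when $0$ has multiplicity at least $3$ in $J_n$; multiplicities in the non-simple case are then handled by a continuity argument. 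Your direct verification that $A_{2l+1}\bm u_0^{(0)}=x\bm u_0^{(0)}$ via Remark~\ref{Matrix.E-lam0} is correct and matches the paper, but it covers only one eigenvalue; without the eigenvector construction for the pairs $\pm\sqrt{\lambda_k^2+x^2}$ (or some equivalent device), the theorem is not proved.
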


In particular, if $x^2=-\lambda_j^2\ne 0$ for some~$j$, then~$0$ is an eigenvalue of $A_n$ of even multiplicity.
%
Moreover, for~$n=2l+1$ some~$\lambda_k$ may vanish, in which case formul\ae~\eqref{spectrum.A.1}--\eqref{spectrum.A.2}
show the existence of eigenvalues~$\pm x$: the eigenvalue~$x$ of~$A_{2l+1}$ has odd multiplicity,
while~$-x$ is its eigenvalue of even multiplicity. 

\begin{proof}
Note that the easily verifiable identity $J_nE_n+E_nJ_n=0$ implies that
%
\begin{equation}\label{A.square}
A^2_n=(J_n+xE_n)^2=J_n^2+xJ_nE_n+xE_nJ_n+x^2I_n=J_n^2+x^2I_n^2,
\end{equation}
where $I_n$ is the $n\times n$ identity matrix.
\medskip

Let $n=2l$. In this case, $\lambda_k\neq0$ for any $k=1,\ldots,l$. Suppose first that all the $\lambda_k$
are distinct. From~\eqref{A.square}, it follows that all the eigenvalues of~$A_{2l}^2$ are double and equal to~$\lambda_k^2+x^2$ for some~$k=1,\dots,l$. According to~\cite[Chapter VIII, \S 6--7]{GantmacherI}, for each~$k$ the numbers~$\sqrt{\lambda_k^2+x^2}$ and/or~$-\sqrt{\lambda_k^2+x^2}$ (and only they) are in the spectrum of the matrix~$A_{2l}$. Our aim is to show that the spectrum of~$A_{2l}$ is indeed given by~\eqref{spectrum.A.1}, where all eigenvalues are simple possibly excluding the double eigenvalue~$0$ corresponding to~$x^2=-\lambda_k^2$.
We do this by finding an explicit expression for the eigenvector of~$A_{2l}$ for each eigenvalue.

\vspace{1mm}

Let~$\mu=\sqrt{\lambda_k^2+x^2}\neq0$ for some $k$ and some fixed branch of the square root. If~$\mu$ is an eigenvalue of $A_{2l}$, then there exists a corresponding eigenvector~$\bm v_0^{(\mu)}$ satisfying
\begin{equation*}
A_{2l}^2\bm v_0^{(\mu)}=\mu^2\bm v_0^{(\mu)}=(\lambda_k^2+x^2)\,\bm v_0^{(\mu)}.
\end{equation*}
Then, due to
\begin{equation*}
A_{2l}^2 \bm u_0^{(\lambda_{k})} = (\lambda_k^2 + x^2) \bm u_0^{(\lambda_{k})}
\quad\text{and}\quad
A_{2l}^2 \bm u_0^{(-\lambda_{k})} = (\lambda_k^2 + x^2) \bm u_0^{(-\lambda_{k})}
,
\end{equation*}
where the eigenvectors~$\bm u_0^{(\lambda_{k})}$ and~$\bm u_0^{(-\lambda_{k})}$ of~$J_{2l}$ correspond to~$\lambda_{k}$ and~$-\lambda_k$ respectively,
we have
\begin{equation*}
\bm v_0^{(\mu)} = \alpha \bm u_0^{(\lambda_{k})} + \beta \bm u_0^{(-\lambda_{k})},
\end{equation*}
for a certain choice of the coefficients~$\alpha$ and~$\beta$. Therefore,
\begin{align*}
  \mu\big(\alpha \bm u_0^{(\lambda_{k})} + \beta\bm u_0^{(-\lambda_{k})}\big)
  &=
    \mu \bm v_0^{(\mu)}
    =
    A_{2l} \bm v_0^{(\mu)}
    =
    (J_{2l}+xE_{2l})\big(\alpha \bm u_0^{(\lambda_{k})} + \beta \bm u_0^{(-\lambda_{k})}\big)
  \\
  &=
    \alpha J_{2l} \bm u_0^{(\lambda_{k})} + \alpha x \bm E_{2l} u_0^{(\lambda_{k})} + \beta J_{2l} \bm u_0^{(-\lambda_{k})} + \beta x \bm E_{2l} u_0^{(-\lambda_{k})}
  \\
  &=
    (\alpha \lambda +\beta x) \bm u_0^{(\lambda_{k})} + (\alpha x - \beta\lambda_k)\bm u_0^{(-\lambda_{k})}
    ,
\end{align*}
%
whence on choosing~$\alpha=1$ we obtain~$\beta= \frac x{\lambda_k+\mu}=\frac{\mu-\lambda_k}x$.
So,
\begin{equation*}
\bm v_0^{(\mu)}= \bm u_0^{(\lambda_{k})} + \frac x{\lambda+\mu} \bm u_0^{(-\lambda_{k})}
,
\end{equation*}
and, on taking another branch of the square root ($-\mu$ instead of~$\mu$), it follows that~$\bm v_0^{(-\mu)}$ may be set to be
\begin{equation*}
\bm u_0^{(\lambda_{k})} + \frac x{\lambda_k-\mu} \bm u_0^{(-\lambda_{k})}
=\bm u_0^{(\lambda_{k})} - \frac {\lambda_k+\mu}x \bm u_0^{(-\lambda_{k})}.
\end{equation*}
The last expression for~$\bm v_0^{(-\mu)}$ degenerates as~$x\to 0$, so stretching it by the factor~$-\frac x{\lambda_k+\mu}$ leads to a more convenient renormalisation
\begin{equation*}
\bm v_0^{(-\mu)}= \bm u_0^{(-\lambda_{k})} - \frac{x}{\lambda+\mu} \bm u_0^{(\lambda_{k})}.
\end{equation*}
Thus, both numbers $\mu\neq0$ and $-\mu$ belong to the spectrum $A_{2l}$
provided $J_{2l}$ has only simple eigenvalues.

\vspace{1mm}

Suppose now that $x=\pm i\lambda_j$ for a fixed $j$, and $\bm u_0^{(\lambda_{j})}$ and $\bm u_0^{(-\lambda_{j})}$
are the eigenvectors of $J_{2l}$ corresponding to the eigenvalues $\lambda_j$ and $-\lambda_j$, respectively.
Then it is easy to see that
\begin{equation*}
A_{2l}(\bm u_0^{(\lambda_{j})}\pm i\bm u_0^{(-\lambda_{j})})=(J_{2l}\pm i\lambda_jE_{2l})(\bm u_0^{(\lambda_{j})}\pm i\bm u_0^{(-\lambda_{j})})=0
\quad\text{and}\quad
\dfrac1{2\lambda}A_{2l}(\bm u_0^{(\lambda_{j})}\mp i\bm u_0^{(-\lambda_{j})})=\bm u_0^{(\lambda_{j})}\pm i\bm u_0^{(-\lambda_{j})},
\end{equation*}
so
\begin{equation*}
\bm v_0^{(0)}=\bm u_0^{(\lambda_{j})}\pm i\bm u_0^{(-\lambda_{j})}\qquad\text{and}\qquad v_1^{(0)}=\dfrac{\bm u_0^{(\lambda_{j})}\mp i\bm u_0^{(-\lambda_{j})}}{2\lambda}
\end{equation*}
are, respectively, the eigenvector and generalised eigenvector of the zero eigenvalue of the matrix $A_{2l}$.

Thus, if all eigenvalues $\pm\lambda_k$ of~$J_{2l}$ are simple, then $A_{2l}$ has simple non-zero eigenvalues of the form
$\pm\sqrt{\lambda_k^2+x^2}$ (and, in the case $x=\pm i\lambda_j$, a double zero eigenvalue). Now,
if some $\lambda_k$ is an eigenvalue
of~$J_{2l}$ of multiplicity~$r$ and $x^2\neq-\lambda_k^2$, then due to continuous dependence of the characteristic polynomial on its roots%
, the eigenvalues
$\pm\sqrt{\lambda_k^2+x^2}$ of $A_{2l}$ are also of multiplicity~$r$. Analogously, if $x^2=-\lambda_j^2$ and $\lambda_j$ is an eigenvalue
of $J_{2l}$ of multiplicity~$r$, then by continuity the eigenvalue $\mu=0$ of $A_{2l}$ is of multiplicity~$2r$.
\medskip

Suppose now that $n=2l+1$. From~\eqref{A.square} and~\cite[Chapter VIII, \S 6--7]{GantmacherI}, it follows
that the set
\begin{equation*}
\{\pm x,\pm\sqrt{\lambda_1^2+x^2},\pm\sqrt{\lambda_2^2+x^2},\ldots,\pm\sqrt{\lambda_l^2+x^2}\}.
\end{equation*}
contains all possible eigenvalues of the matrix $A_{2l+1}$.
Similarly to the case of $n=2l$, if non-zero eigenvalues $\pm\lambda_k$ of $J_{2l+1}$ have multiplicity~$r$, one can show that the matrix $A_{2l+1}$ has eigenvalues $\pm\sqrt{\lambda_k^2+x^2}$ of multiplicity~$r$ for~$x^2\neq-\lambda^2_k$, or the
zero eigenvalue of multiplicity~$2r$ for~$x^2=-\lambda^2_k$. (In fact, the relations between
the corresponding eigenvectors of~$A_{n}$ and~$J_{n}$ for~$n=2l+1$ remain the same as for~$n=2l$.)

\vspace{1mm}

Moreover, $\mu=x$ is always an eigenvalue of $A_{2l+1}$. Indeed, if $J_{2l+1}\bm u_0^{(0)}=0$, then~$E_{2l+1}\bm u_0^{(0)}=\bm u_0^{(0)}$ according to Remark~\ref{Matrix.E-lam0}, and hence
\begin{equation}\label{eq:eigenvector_lam=0}
    A_{2l+1} \bm u_0^{(0)}=J_{2l+1}\bm u_0^{(0)} + x E_{2l+1}\bm u_0^{(0)}=x \bm u_0^{(0)}.
\end{equation}
Suppose now that $J_{2l+1}$ has a zero eigenvalue of multiplicity $3$, and
\begin{equation*}
J_{2l+1}\bm u_0^{(0)}=0,\quad J_{2l+1}\bm u_1^{(0)}=\bm u_0^{(0)},\quad J_{2l+1}\bm u_2^{(0)}=\bm u_1^{(0)}.
\end{equation*}
In this case, $-x$ is also an eigenvalue of the matrix~$A_{2l+1}$ with the eigenvector~$\bm v_0^{(-x)}=\bm u_0^{(0)}-2x\bm u_1^{(0)}$. Indeed,
\begin{equation*}
A_{2l+1} (\bm u_0^{(0)}-2x\bm u^{(0)}_1)
=x\bm u_0^{(0)} -2x (J_{2l+1}\bm u^{(0)}_1 + x E_{2l+1}\bm u^{(0)}_1)
=x\bm u_0^{(0)} -2x (\bm u_0^{(0)} - x\bm u^{(0)}_1) =
-x (\bm u_0^{(0)}-2x\bm u^{(0)}_1).
\end{equation*}
Moreover, in this case $x$ is an eigenvalue of $A_{2l+1}$ of multiplicity at least $2$, and by~\eqref{vectors.v} the vector $\bm v_1^{(x)}=\bm u_1^{(0)}+2x\bm u_2^{(0)}$ satisfies
\begin{equation*}
(A_{2l+1}-xI_{2l+1})\bm v_1^{(x)}=(J_{2l+1}+xE_{2l+1}-xI_{2l+1})\bm v_1^{(x)}=\bm u_0^{(0)}+2x\bm u_1^{(0)}-x\bm u_1^{(0)}+2x^2\bm u_2^{(0)}-x\bm u_1^{(0)}-2x^2\bm u_2^{(0)}=\bm u_0^{(0)}.
\end{equation*}
Hence, if~$0$ is a triple eigenvalue of~$J_{2l+1}$, then $x$ is double and~$-x$ is simple due to multiplicities of the other eigenvalues of~$A_{2l+1}$.

Now by continuity we get that if~$0$ is an eigenvalue of~$J_{2l+1}$ of multiplicity $2r+1$, $r\geqslant0$, then $x$ is an eigenvalue of $A_{2l+1}$ of multiplicity~$r+1$ while~$-x$ is of multiplicity~$r$. Consequently, formul\ae~\eqref{spectrum.A.1}--\eqref{spectrum.A.2}
completely describe the spectrum of $A_n$.
\end{proof}

As a consequence of this theorem one gets the following formul\ae\ generalising the result of~\cite{Kilic_2013}.

\begin{corol}
For the matrix $A_n$ defined in~\eqref{Matrix.A},
\begin{equation*}
\det A_{2l}=(-1)^l\prod\limits_{k=1}^{l}\left(x^2+\lambda_k^2\right)
\quad\text{and}\quad
\det A_{2l+1}=(-1)^l x\prod\limits_{k=1}^{l}\left(x^2+\lambda_k^2\right),
\end{equation*}
where some of the numbers~$\lambda_k$ in the second product can be zero.
\end{corol}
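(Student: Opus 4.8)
The plan is to read off the determinant directly from the spectral description in Theorem~\ref{th:An-spectrum}, using the fact that the determinant of any square matrix equals the product of its eigenvalues counted with algebraic multiplicity. Since Theorem~\ref{th:An-spectrum} already accounts for multiplicities correctly --- including the subtle cases where $x^2=-\lambda_k^2$ produces a zero eigenvalue of doubled multiplicity, and where some $\lambda_k$ vanishes for odd $n$ --- no separate bookkeeping of multiplicities is needed here.

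For the even case $n=2l$, formula~\eqref{spectrum.A.1} lists the eigenvalues of $A_{2l}$ in pairs $\pm\sqrt{\lambda_k^2+x^2}$, $k=1,\dots,l$, so
\begin{equation*}
\det A_{2l}=\prod_{k=1}^{l}\Bigl(\sqrt{\lambda_k^2+x^2}\Bigr)\Bigl(-\sqrt{\lambda_k^2+x^2}\Bigr)=\prod_{k=1}^{l}\bigl(-(\lambda_k^2+x^2)\bigr)=(-1)^l\prod_{k=1}^{l}\bigl(x^2+\lambda_k^2\bigr).
\end{equation*}
Note that when $x^2=-\lambda_k^2$ the pair $\pm\sqrt{\lambda_k^2+x^2}$ collapses to the eigenvalue $0$ of multiplicity two, contributing the factor $0=x^2+\lambda_k^2$ to the product, so the formula remains valid verbatim. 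For the odd case $n=2l+1$, formula~\eqref{spectrum.A.2} gives the additional eigenvalue $x$ alongside the same $l$ pairs, hence
\begin{equation*}
\det A_{2l+1}=x\prod_{k=1}^{l}\bigl(-(\lambda_k^2+x^2)\bigr)=(-1)^l x\prod_{k=1}^{l}\bigl(x^2+\lambda_k^2\bigr),
\end{equation*}
where, as the corollary states, some $\lambda_k$ may equal zero. The case distinctions in the remarks following Theorem~\ref{th:An-spectrum} (e.g.\ $x$ of multiplicity $r+1$ and $-x$ of multiplicity $r$ when $0$ is an eigenvalue of $J_{2l+1}$ of multiplicity $2r+1$) are already consistent with simply pairing the factors as above: the extra copies of $x$ and $-x$ multiply to $(x^2+\lambda_k^2)$ with $\lambda_k=0$.

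There is essentially no obstacle: the only point requiring a moment's care is to confirm that the eigenvalue list in~\eqref{spectrum.A.1}--\eqref{spectrum.A.2} is a list \emph{with multiplicity} (which Theorem~\ref{th:An-spectrum} asserts, since it claims to ``completely describe the spectrum''), so that the product over the list really is $\det A_n$ rather than merely the product over the distinct eigenvalues. Granting that, the corollary is an immediate one-line consequence for each parity.
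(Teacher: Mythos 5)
Your proposal is correct and matches the paper's intent exactly: the paper offers no separate proof, presenting the corollary as an immediate consequence of Theorem~\ref{th:An-spectrum} by multiplying the eigenvalues listed (with multiplicity) in~\eqref{spectrum.A.1}--\eqref{spectrum.A.2}, which is precisely your computation. Your remarks on the degenerate cases ($x^2=-\lambda_k^2$ and vanishing $\lambda_k$) are accurate and consistent with the multiplicity bookkeeping established in the theorem's proof.
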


\subsection{Eigenvectors and generalised eigenvectors}

\newcommand{\ulam}[1][]{\ensuremath{\bm u^{({#1}\lambda_k)}}}
\newcommand{\vmu}[1][]{\ensuremath{\bm v^{({#1}\mu)}}}

Let us list the explicit expressions for eigenvectors and first generalised eigenvectors for all possible eigenvalues of~$A_n$.

\vspace{2mm}

\noindent $1)$ $\mu=x$. According to Theorem~\ref{th:An-spectrum}, if $\lambda=0$ is an eigenvalue of $J_n$ of multiplicity $2r+1$, then
$\mu=x$ is an eigenvalue of~$A_n$ of multiplicity~$r+1$. In the proof of that theorem we showed that the eigenvector and first generalised eigenvector
of $A_n$ corresponding to $\mu=x$ are
\begin{equation*}
\begin{array}{l}
\bm v_0^{(x)}=\bm u_0^{(0)},\\
\bm v_1^{(x)}=\bm u_1^{(0)}+2x\bm u_2^{(0)},
\end{array}
\end{equation*}
where $\bm u_k^{(0)}$, $k=0,1,2$, are the eigenvector and generalised eigenvectors of $J_n$ corresponding to the eigenvalue~$\lambda=0$.

\vspace{1mm}

\noindent $2)$ $\mu=-x$. By Theorem~\ref{th:An-spectrum}, it is an eigenvalue of~$A_{n}$ only if~$\lambda=0$ is an eigenvalue
of~$J_n$ of multiplicity at least~$3$; if multiplicity of~$\lambda=0$ is at least $5$, then
$\mu=-x$ is a multiple eigenvalue of $A_n$. In the proof of Theorem~\ref{th:An-spectrum} we showed that the eigenvector
of $A_n$ corresponding to $\mu=-x$ has the form
\begin{equation*}
\bm v_0^{(-x)}=\bm u_0^{(0)}-2x\bm u_1^{(0)}.
\end{equation*}

Let us find $\bm v_1^{(-x)}$. Since by definition
\begin{equation}\label{gen.vect.-x}
(A_n+xI_n)\bm v_1^{(-x)}=\bm v_0^{(-x)},
\end{equation}
with use of~\eqref{A.square} one has
\begin{equation*}
0=(A_n+xI_n)^2\bm v_1^{(-x)}=(J_n^2+x^2I_n)\bm v_1^{(-x)}-2x^2\bm v_1^{(-x)}+2x\bm v_0^{(-x)}+x^2\bm v_1^{(-x)}=
J_n^2\bm v_1^{(-x)}+2x\bm v_0^{(-x)},
\end{equation*}
so that
\begin{equation*}
J_n^2\bm v_1^{(-x)}=-2x\bm v_0^{(-x)}=-2x\bm u_0^{(0)}+4x^2\bm u_1^{(0)}.
\end{equation*}
Therefore, the vector $\bm v_1^{(-x)}$ is a linear combination of the vectors $\bm u_k^{(0)}$, $k=0,\ldots,3$,
\begin{equation*}
\bm v_1^{(-x)}=\alpha\bm u_0^{(0)}+\beta\bm u_1^{(0)}-2x\bm u_2^{(0)}+4x^2\bm u_3^{(0)}.
\end{equation*}
Substituting this into~\eqref{gen.vect.-x} (with $\alpha=0$) gives us
\begin{equation*}
\bm v_1^{(-x)}=\bm u_1^{(0)}-2x\bm u_2^{(0)}+4x^2\bm u_3^{(0)}.
\end{equation*}

\vspace{1mm}

\noindent $3)$ $\mu=0$. If $\lambda_j\neq0$ and $x=\pm i\lambda_j$, then according to the proof of Theorem~\ref{th:An-spectrum}, the correspondent eigenvector and first generalised eigenvectors of $A_n$ are
\begin{equation}\label{eigenvect.mu=0}
\bm v_0^{(0)}=\bm u_0^{(\lambda_j)}\pm i\bm u_0^{(-\lambda_j)}
\quad\text{and}\quad
\bm v_1^{(0)}=\dfrac{\bm u_0^{(\lambda_j)}\mp  i\bm u_0^{(-\lambda_j)}}{2\lambda_j}.
\end{equation}

\vspace{1mm}

\noindent $4)$ $\mu=\sqrt{x^2+\lambda_k^2}$ for some eigenvalue $\lambda_k\neq0$ of the matrix $J_n$, where we choose any fixed branch of the complex square root. From the proof of Theorem~\ref{th:An-spectrum}, we know that
\begin{equation*}
\bm v_0^{(\mu)}= \bm u_0^{(\lambda_{k})} + \frac x{\lambda_k+\mu} \bm u_0^{(-\lambda_{k})}
\end{equation*}

If $\mu$ is a multiple eigenvalue, the same approach we used to find the eigenvector allows to express the first generalised eigenvector of~$A_n$ corresponding to~$\mu$ as
combinations of the eigenvectors and generalised eigenvectors of~$J_n$ corresponding
to~$\lambda_k$. Namely, let
\[J_n \bm u_1^{(\lambda_k)} = \lambda \bm u_1^{(\lambda_k)} + \bm u_0^{(\lambda_k)}.
\]
If~$\vmu_1$ satisfying~$A_n \vmu_1 = \mu \vmu_1 + \vmu_0$ is sought in the form
\begin{equation*}
    \vmu_1= \alpha \bm u_1^{(\lambda_k)} + \beta \bm u_1^{(-\lambda_k)} + \gamma \bm u_0^{(\lambda_k)} + \delta \bm u_0^{(-\lambda_k)},
\end{equation*}
then the same approach as above yields
\begin{equation}\label{eq:vmu_gen_via_ulam}
\vmu_1
=\dfrac1{2\lambda_k}\left(\bm u_0^{(\lambda_k)}-\dfrac{x}{\lambda_k+\mu}\bm u_0^{(-\lambda_k)}\right)
+
\dfrac{\mu}{\lambda_k}
\left(\bm u_1^{(\lambda_k)}
-\dfrac{x}{\lambda_k+\mu}\bm u_1^{(-\lambda_k)}\right).
\end{equation}
Here the chosen values of~$\alpha,\beta,\gamma,\delta$ are natural in the sense that the $\bm v_0^{(\mu)}$
and $\bm v_1^{(\mu)}$ become~\eqref{eigenvect.mu=0} as $\mu\to0$ and do not degenerate as $x\to0$.

Analogously, one gets
\begin{equation*}
\begin{array}{l}
\displaystyle\bm v_0^{(-\mu)}= \bm u_0^{(-\lambda_{k})} - \frac{x}{\lambda_k+\mu} \bm u_0^{(\lambda_{k})}
\qquad\text{and}\\[8pt]
\bm v_1^{(-\mu)}=-\dfrac1{2\lambda_k}\left(\dfrac{x}{\lambda_k+\mu}\bm u_0^{(\lambda_k)}+\bm u_0^{(-\lambda_k)}\right)
+
\dfrac{\mu}{\lambda_k}
\left(\dfrac{x}{\lambda_k+\mu}\bm u_1^{(\lambda_k)}+\bm u_1^{(-\lambda_k)}\right).

\end{array}
\end{equation*}

We note the choice of generalised eigenvectors is non-unique%
\footnote{For instance, for any constant~$\varepsilon\in\mathbb C$ the
    combination~$\ulam_1+\varepsilon\ulam_0$ is a generalised eigenvector of~$J_n$ corresponding
    to~$\lambda$.}%
, and the expression~\eqref{eq:vmu_gen_via_ulam} may be replaced with a different (and in a
sense more general) formula considered in our forthcoming publication~\cite{DyTy_2}. That
publication also gives a detailed description of the generalised eigenvectors corresponding to
the eigenvalues~$\pm x$ of~$A_{2l+1}$ induced by the non-simple eigenvalue~$\lambda=0$
of~$J_{2l+1}$.

\setcounter{equation}{0}
\section{Tridiagonal matrices with two-periodic main diagonal}\label{section:two.periodic}

Consider now the matrix
\begin{equation*}\label{Kilic.Arikan.matrix}
B_n=
\begin{pmatrix}
    b_1 & c_1 &  0 &\dots&   0   & 0 \\
    a_1 & b_2 &c_2 &\dots&   0   & 0 \\
     0  &a_2 & b_3 &\dots&   0   & 0 \\
    \vdots&\vdots&\vdots&\ddots&\vdots&\vdots\\
     0  &  0  &  0  &\dots&b_{n-1}& c_{n-1}\\
     0  &  0  &  0  &\dots&a_{n-1}&b_{n}\\
\end{pmatrix},
\quad a_k,c_k\in\mathbb{C}\setminus\{0\},\quad
b_k=\begin{cases}
x&\text{if}\quad k\ \ \text{is odd},\\
y&\text{if}\quad k\ \ \text{is even}.\\
\end{cases}
\end{equation*}

It is easy to see that
\begin{equation*}
B_{n}=J_n+\dfrac{x-y}{2}E_n+\dfrac{x+y}{2}I_n,
\end{equation*}
where $J_n$ is defined in~\eqref{main.matrix.intro}, so from~\eqref{spectrum.A.1}--\eqref{spectrum.A.2} we obtain
\begin{equation}\label{spectrum.B.1}
\sigma\left(B_{2l}\right)=\left\{\dfrac{x+y}2\pm\dfrac12\sqrt{4\lambda_1^2+(x-y)^2},\dfrac{x+y}2\pm\dfrac12\sqrt{4\lambda_2^2+(x-y)^2},\ldots,\dfrac{x+y}2\pm\dfrac12\sqrt{4\lambda_l^2+(x-y)^2}\right\},
\end{equation}
and
\begin{equation}\label{spectrum.B.2}
\sigma\left(B_{2l+1}\right)=\left\{x,\dfrac{x+y}2\pm\dfrac12\sqrt{4\lambda_1^2+(x-y)^2},\dfrac{x+y}2\pm\dfrac12\sqrt{4\lambda_2^2+(x-y)^2},
\ldots,\dfrac{x+y}2\pm\dfrac12\sqrt{4\lambda_l^2+(x-y)^2}\right\}.
\end{equation}
These formul\ae\ can be obtained from~\eqref{spectrum.A.1}--\eqref{spectrum.A.2} by replacing $x$ with $\dfrac{x-y}2$ and then by adding $\dfrac{x+y}2$ to all the eigenvalues.

From~\eqref{spectrum.B.1}--\eqref{spectrum.B.2}, one can easily obtain that the determinant of $B_n$ has the form
\begin{equation}\label{det.B}
\det B_{2l}=\prod\limits_{k=1}^{l}\left(xy-\lambda_k^2\right),\qquad\quad \det B_{2l+1}=x\prod\limits_{k=1}^{l}\left(xy-\lambda_k^2\right).
\end{equation}
On letting~$J_n$ to be the Sylvester-Kac matrix or its main principal submatrix, the formul\ae~\eqref{spectrum.B.1}--\eqref{det.B} generalise the results of the works~\cite{Fonseca_2018,KilicArikan_2016}, as well as an analogous transition in~\cite{FonsecaKilic_2019}.

Observe that the eigenvectors and generalised eigenvalues of the matrices~$A_n$ and~$B_n$ are
related through replacing~$x$ to~$\frac{x-y}2$.

The right eigenvalues of~$A_n$ and~$B_n$ may
be obtained using the following remark.

\begin{remark}
Note that if the (right) eigenvectors and generalised eigenvectors of some irreducible tridiagonal matrix are known, then
it is easy to find its left eigenvectors: that is, the eigenvectors of a matrix
\[
\mathcal J_n
=
\begin{pmatrix}
    b_1 & c_1 &  0 &\dots&   0   & 0 \\
    a_1 & b_2 &c_2 &\dots&   0   & 0 \\
     0  &a_2 & b_3 &\dots&   0   & 0 \\
    \vdots&\vdots&\vdots&\ddots&\vdots&\vdots\\
     0  &  0  &  0  &\dots&b_{n-1}& c_{n-1}\\
     0  &  0  &  0  &\dots&a_{n-1}&b_{n}\\
\end{pmatrix},
\quad a_k,c_k\in\mathbb{C}\setminus\{0\}
\]
are related to the eigenvectors of its transposed $\mathcal J_n^{T}$.
It is clear that the spectra of $\mathcal J_n^{T}$ and $\mathcal J_n$ coincide. So if  $\widetilde{\textit{\textbf{u}}}_0^{(\lambda)}$
is the eigenvector of $\mathcal J_n^{T}$ corresponding to the eigenvalue $\lambda$, then the obvious formula
\begin{equation}\label{Transpose.repres}
\mathcal J_n^{T}=D_n^{-1}\mathcal J_nD_n,
\end{equation}
where the diagonal matrix $D$ has the form
\begin{equation*}
D=
\begin{pmatrix}
    d_1 & 0 &  0 &\dots&   0   & 0 \\
    0 & d_2 &0 &\dots&   0   & 0 \\
     0  &0 & d_3 &\dots&   0   & 0 \\
    \vdots&\vdots&\vdots&\ddots&\vdots&\vdots\\
     0  &  0  &  0  &\dots&d_{n-1}& 0\\
     0  &  0  &  0  &\dots&0&d_{n}\\
\end{pmatrix},\qquad\text{with}\quad d_1=1,\ \ d_k=\dfrac{a_1a_2\cdots a_k}{c_1c_2\cdots c_k}, \quad k=1,\ldots,n-1.
\end{equation*}
implies by induction that
\begin{equation*}
\widetilde{\textit{\textbf{u}}}_0^{(\lambda)}=D_n^{-1}\textit{\textbf{u}}_0^{(\lambda)},
\qquad
\widetilde{\textit{\textbf{u}}}_j^{(\lambda)}=D_n^{-1}\textit{\textbf{u}}_j^{(\lambda)},\ \ \ j=1,\ldots,k-1,
\end{equation*}
where $k$ is the multiplicity of the eigenvalue $\lambda$.
\end{remark}

\setcounter{equation}{0}
\section*{Acknowledgement}

M.\,Tyaglov was partially supported by the  National Natural Science Foundation of China, grant no.~11871336.

\end{document}